\documentclass[11pt,reqno]{amsart}
\usepackage[letterpaper,margin=1in]{geometry}                
\usepackage{amssymb,amsmath,amsfonts,stmaryrd}
\usepackage{xcolor}
\usepackage[colorlinks,
             linkcolor=blue,
             citecolor=green!70!black,
             pdfproducer={pdfLaTeX},
             pdfpagemode=None,
             bookmarksopen=true
             bookmarksnumbered=true]{hyperref}
\usepackage{graphicx}
\usepackage{multicol}

\usepackage{tikz}
\usetikzlibrary{decorations.markings,arrows,decorations.pathreplacing}
\usepackage{arydshln}

\newcommand\arXiv[1]{\href{http://arxiv.org/abs/#1}{\nolinkurl{arXiv:#1}}}
\newcommand\MRnumber[1]{\href{http://www.ams.org/mathscinet-getitem?mr=#1}{\nolinkurl{MR#1}}}
\newcommand\DOI[1]{\href{http://dx.doi.org/#1}{\nolinkurl{DOI:#1}}}
\newcommand\MAILTO[1]{\href{mailto:#1}{\nolinkurl{#1}}}

\newtheorem*{mainthm}{Main Theorem}

\newtheorem{dummy}{Dummy}[section]

\newtheorem{proposition}[dummy]{Proposition}

\newtheorem{theorem}[dummy]{Theorem}
\newtheorem{definition}[dummy]{Definition}

\theoremstyle{definition}

\newtheorem{remark}[dummy]{Remark}

\usepackage{dsfont}
\renewcommand\mathbb\mathds

\newcommand\bC{\mathbb C}

\newcommand\bQ{\mathbb Q}

\newcommand\bZ{\mathbb Z}

\newcommand\cB{\mathcal B}

\newcommand\cD{\mathcal D}
\newcommand\cE{\mathcal E}
\newcommand\cF{\mathcal F}

\newcommand\cM{\mathcal M}
\newcommand\cN{\mathcal N}

\newcommand\rB{\mathrm B}

\usepackage[mathscr]{euscript}

\newcommand\longto\longrightarrow
\newcommand\mono\hookrightarrow
\newcommand\epi\twoheadrightarrow

\newcommand\<\langle
\renewcommand\>\rangle
\newcommand\sminus\smallsetminus

\newcommand\st{\text{ s.t.\ }}

\newcommand\id{\mathrm{id}}

\DeclareMathOperator\Mod{\cat{Mod}}
\DeclareMathOperator\Rep{\cat{Rep}}

\renewcommand\ell{\mathrm{ell}}

\newcommand\br{\mathrm{br}}

\DeclareMathOperator\homology{H}
\renewcommand\H{\homology}

\DeclareMathOperator\FPdim{FPdim}
\DeclareMathOperator\K{K}

\DeclareMathOperator\Gal{Gal}

\DeclareMathOperator\NBFC{NBFC}
\DeclareMathOperator\Witt{Witt}
\newcommand\triv{\mathrm{triv}}
\DeclareMathOperator\MNE{MNE}

\renewcommand\Vec{\cat{Vec}}
\newcommand\sVec{\cat{sVec}}

\renewcommand\d{\mathrm{d}}

\newcommand\define[1]{\emph{#1}}
\newcommand\cat[1]{\mathbf{#1}}

\title{Schopieray's Galois-modular extension conjecture}
\author{Theo Johnson-Freyd}
\date{\today}
\thanks{I thank Andrew Schopieray for suggesting and discussing this question and for helpful comments on a draft. This article was written at the BIRS workshop 26w5586 - New Developments in Tensor Categories. The author's work is supported by the NSERC grand RGPIN-2021-02424 and by the Simons Collaboration on Global Categorical Symmetries SFI-MPS-GCS-00008528-16.}

\begin{document}
\begin{abstract}
Plavnik, Schopieray, Yu, and Zhang have drawn attention to those (automatically premodular) fusion subcategories of modular fusion categories which are submodules for the Galois action on the ambient category. In particular, they showed that a subcategory is a Galois submodule if and only if its centralizer is integral.  In the other direction, Schopieray has conjectured that every premodular fusion category can be embedded as a Galois-closed subcategory of a modular category; Schopieray calls such an embedding a \define{Galois-modular extension}. We prove Schopieray's conjecture for pseudounitary categories. Along the way we record some general comments about the minimal nondegenerate extension problem for braided fusion categories.
\end{abstract}
\maketitle

\section{Galois-modular extensions}

Braided fusion categories are a particularly nice, and very well-studied, type of noncommutative categorified algebra.
They are in many ways higher analogues of finite-dimensional semisimple algebras over a non-algebraically-closed field.
 All of our braided fusion categories will be over~$\bC$. We will not review the definition;  standard references include \cite{MR2609644,EGNO}.

 Like their lower-categorical cousins, one of the most basic invariants of a braided fusion category is its centre. Specifically, the appropriate \define{centre} of a braided fusion category $\cB$ is its ``M\"uger'' or ``symmetric'' centre:
$$ Z_2(\cB) := \{x \in \cB \st \br_{y,x} \circ \br_{x,y} = \id_{x \otimes y} \text{ for all } y \in \cB \} \subset \cB.$$
Here $\br_{x,y} : x \otimes y \to y \otimes x$ denotes the braiding in $\cB$, and implicit in the formula is to take the full category with the declared objects. As in the lower-categorical case, this centre can vary from trivial to everything. At one extreme, a braided fusion category is \define{symmetric} when $Z_2(\cB) = \cB$. At the other extreme, a braided fusion category is \define{nondegenerate} when $Z_2(\cB) = \cat{Vec}$. Most braided fusion categories are somewhere between these two extremes.

Every braided fusion category can be embedded fully faithfully inside a nondegenerate braided fusion category, just as any finite-dimensional associative algebra can be embedded faithfully inside a central simple algebra. In the lower-categorical case, one can embed $A$ inside the algebra of matrices on the underlying vector space of $A$. In the higher-categorical case, one can embed $\cB$ inside the Drinfel'd centre $Z_1(\cB)$ of the underlying monoidal category of $\cB$.
More generally, a \define{nondegenerate extension} of $\cB$ is a choice of fully faithful braided embedding $\cB \mono \cM$ where $\cM$ is a nondegenerate braided fusion category.

A good way to measure the ``relative size''  of a nondegenerate extension $\cB \mono \cM$ is to compute its centralizer:
$$ Z_2(\cB \mono \cM) := \{x \in \cM \st \br_{y,x} \circ \br_{x,y} = \id_{x \otimes y} \text{ for all } y \in \cB \} \subset \cM.$$
It is easy to see that the centres of $\cB$ and $Z_2(\cB \mono \cM)$ are (canonically) equivalent, so that in particular $Z_2(\cB \mono \cM)$ is never smaller than $Z_2(\cB)$, and M\"uger suggested to focus on those extensions that saturate this bound \cite{MR1990929}:
\begin{definition}
  A nondegenerate extension $\cB \mono \cM$ of a braided fusion category is \define{minimal} when its centralizer $Z_2(\cB \mono \cM)$ is symmetric.
\end{definition}
\begin{remark}\label{remark:mne}
  In spite of the name, there are known examples of braided fusion categories which do not admit any nondegenerate extension which is minimal in M\"uger's sense. In other words, M\"uger's bound is not always sharp. It is known that a minimal nondegenerate extension exists whenever $Z_2(\cB) \cong \sVec$ \cite{MNE}. When $Z_2(\cB) \cong \cat{Rep}(G)$ for a finite group $G$, then there is a class $O_4(\cB) \in \H^4(G; \bC^\times)$, and $\cB$ admits a minimal nondegenerate extension if and only if $O_4(\cB) = 1$ \cite{MR4971769}; for every class $\alpha \in \H^4(G; \bC^\times)$, there exists a $\cB$ with $O_4(\cB) = \alpha$~\cite{2601.09060}.
\end{remark}

The vast majority of the literature on nondegenerate extensions has focused on the question of constructing and obstructing these so-called minimal extensions. But symmetric categories are not the only interesting class of braided fusion category. Recall that each object $x$ in a (braided) fusion category $\cB$ has a \define{Frobenius--Perron dimension} $\FPdim(x)$, defined as the Frobenius--Perron dimension of the action of multiplication by $[x]$ on the Grothendieck ring $\K_0(\cB)$ of $\cB$. The following definition is from \cite{MR2183279}:

\begin{definition}
  A (braided) fusion category is \define{integral} if the Frobenius--Perron dimensions of all of its objects are rational integers.
\end{definition}

Symmetric implies integral, and the integral (braided) fusion categories are under relatively good control due to their relation to (quasitriangular) quasiHopf algebras. This paper is interested in studying nondegenerate extensions with integral centralizer. The following name is suggested by Schopieray \cite{SchopierayBIRS}:
\begin{definition} \label{defn:Galoisextension}
  A nondegenerate extension $\cB \mono \cM$ is \define{Galois} when $Z_2(\cB \mono \cM)$ is integral.
\end{definition}
This name deserves some explanation. First, recall that a \define{modular} fusion category is a nondegenerate braided fusion category equipped with a spherical structure. A \define{premodular} fusion category is a spherical braided fusion category which might  be degenerate. A fusion subcategory of a modular category is automatically premodular. Conversely, a \define{modular extension} of a premodular fusion category is an extension $\cB \mono \cM$ with $\cM$ modular.

Second, recall that if $\cM$ is modular, then its set of simple objects comes with a canonical action by $\Gal(\bar{\bQ}/\bQ)$, which is trivial only when $\cM$ is integral. The relation to Galois extensions is identified by Plavnik, Schopieray, Yu, and Zhang:
\begin{theorem}[Theorem 4.1.6 of \cite{2111.05228}]
  A modular extension $\cB \mono \cM$ is Galois in the sense of Definition~\ref{defn:Galoisextension} if and only if (the set of isomorphism classes of simple objects of) $\cB$ is closed under the Galois action on (the set of isomorphism classes of simple objects of) $\cM$.
\end{theorem}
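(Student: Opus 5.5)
Write $\cC := Z_2(\cB \mono \cM)$. Since $\cM$ is modular, Müger's double centralizer theorem gives $Z_2(\cC \mono \cM) = \cB$. The plan is to compare two descriptions of the Galois action on $\cM$: one in terms of normalized $S$-matrix columns, and one in terms of the dimension characters attached to the centralizer. Concretely, write $\Irr(\cM)$ for the set of isomorphism classes of simple objects, and let $\sigma \in \Gal(\bar\bQ/\bQ)$ act by $\sigma(s_{ij}/s_{0j}) = s_{i\hat\sigma(j)}/s_{0\hat\sigma(j)}$. Here $s_{ij}$ are the entries of the $S$-matrix, $0$ indexes the unit object, and $\hat\sigma$ is the induced permutation of $\Irr(\cM)$.

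The first step characterizes the centralizer through the $S$-matrix. Every simple $j$ of $\cM$ defines a character $\chi_j$ of the fusion ring $\K_0(\cM)$ by $\chi_j(i) = s_{ij}/s_{0j}$. A simple $j$ lies in $Z_2(X \mono \cM)$ exactly when $s_{ij} = d_i d_j$ for all simple $i \in X$, that is, when $\chi_j|_X = \chi_0|_X = d$, the categorical dimension restricted to $X$. Applying this with $X = \cC$ and using the double centralizer, $\cB$ is the set of $j$ with $\chi_j|_\cC = d|_\cC$.

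For the direction ``Galois implies closed'', assume $\cC$ is integral. By a result of Etingof--Nikshych--Ostrik, for the spherical structure coming from the pseudounitary one, integrality forces every $d_i$ with $i \in \cC$ to equal $\FPdim(i) \in \bZ$. More generally, since $\cC$ is integral, $\cC$ is the representation category of a quasi-Hopf algebra, and its categorical dimensions are $\pm$ integers, hence rational. Then for $j \in \cB$ we get $\chi_{\hat\sigma(j)}|_\cC = \sigma\circ\chi_j|_\cC = \sigma\circ d|_\cC = d|_\cC$, because these values are rational. Hence $\hat\sigma(j) \in \cB$, so $\cB$ is Galois-closed.

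For the converse, assume $\cB$ is closed under $\hat\sigma$ for all $\sigma$. The key observation is that $\chi_{\hat\sigma(0)} = \sigma \circ d$, since $\sigma$ applied to the values of $\chi_0 = d$ gives the character indexed by $\hat\sigma(0)$. The unit $0$ lies in $\cB$, so closure gives $\hat\sigma(0) \in \cB$, and therefore $\sigma\circ d|_\cC = \chi_{\hat\sigma(0)}|_\cC = d|_\cC$. So the dimensions of objects of $\cC$ are fixed by every Galois automorphism and are rational; being algebraic integers, they are integers. It remains to pass from categorical dimensions to FP dimensions. Here I would use $\FPdim(\cC) = \pm\dim(\cC)$ up to the standard inequality $\dim \le \FPdim$ with Galois conjugates, or more simply the theorem that a fusion category whose global dimension $\dim\cC$ is an integer and all $d_i^2 \in \bZ$ is weakly integral. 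Then integrality of each $\FPdim(i)$ follows from $|d_i| = \FPdim(i)$ in the pseudo-unitary case, or from $d_i^2 \in \bZ$ giving weak integrality plus a grading argument in general.

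The main obstacle is this last step, relating categorical dimensions to FP dimensions without assuming pseudounitarity. I expect to handle it using that $\cC$ is premodular with $d|_\cC$ integral, and then cite ENO's result that $\FPdim$ of such categories is integral up to the universal grading by $\sqrt{n}$ factors.
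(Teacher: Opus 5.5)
The paper gives no proof of this statement. It is quoted from Plavnik--Schopieray--Yu--Zhang, so your argument has to stand on its own.

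Its skeleton is sound. First, $\cB=\{j : \chi_j|_{\mathcal C}=d|_{\mathcal C}\}$. This follows from the subcategory form of the centralizer criterion, which holds without pseudounitarity (sum over $\mathcal C$ and use $\dim\mathcal C\cdot\dim\mathcal C'=\dim\cM$ and $d_j^2>0$), together with the double centralizer theorem. Second, the identity $\chi_{\hat\sigma(0)}=\sigma\circ d$ turns both directions into statements about the categorical dimensions $d|_{\mathcal C}$.

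The direction ``integral $\Rightarrow$ closed'' is fine. The correct reason that $d_i\in\bZ$ is not the quasi-Hopf remark, though. It is ENO: integer $\FPdim$ forces pseudounitarity, so $d_i^2=|X_i|^2=\FPdim(X_i)^2$ for any spherical structure.

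The gap is the last step of ``closed $\Rightarrow$ integral''. You reach $d_i\in\bZ$ for all simple $i\in\mathcal C$, but you never derive $\FPdim(X_i)\in\bZ$, and none of the routes you list does it:
\begin{itemize}
\item Integrality of $\dim\mathcal C$ together with $\dim\le\FPdim$ cannot force equality. $\mathrm{Fib}\boxtimes\mathrm{YL}$ (Fibonacci times Yang--Lee) has $\dim=5$ but $\FPdim=(15+5\sqrt5)/2$.
\item The ``theorem'' that $\dim\mathcal C\in\bZ$ and $d_i^2\in\bZ$ imply weak integrality is not a standard result, and you give no argument for it.
\item The Gelaki--Nikshych/ENO grading by $\sqrt n$ factors concerns categories already known to be weakly integral, which is exactly what is in question, so invoking it is circular.
\end{itemize}
If $\cM$ is pseudounitary, which is the only case the paper uses, the step is immediate, since $d_i=\pm\FPdim(X_i)$.

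The step can be closed using the full strength of $d_i\in\bZ$:
\begin{itemize}
\item Let $\mathcal A=Z_1(\mathcal C)$ with the spherical structure induced from $\mathcal C$. It is modular, and each simple object of $\mathcal A$ has dimension a nonnegative integer combination of the $d_i$. So all dimensions in $\mathcal A$ lie in $\bZ$.
\item In a modular category $\FPdim$ is a character of the fusion ring, so $\FPdim=\chi_a$ for some simple $a$.
\item Using $\overline{s_{ia}}=s_{i^*a}$ and $S^2=\dim\mathcal A\cdot C$, one gets $\sum_i|s_{ia}|^2=\dim\mathcal A$. Hence $\FPdim\mathcal A=\dim\mathcal A/d_a^2$, a rational algebraic integer, so it lies in $\bZ$.
\item By ENO, $\mathcal A$ is then pseudounitary, so $\dim(\mathcal C)^2=\dim\mathcal A=\FPdim\mathcal A=\FPdim(\mathcal C)^2$.
\item Thus $\mathcal C$ is pseudounitary, $\FPdim(X_i)^2=|X_i|^2=d_i^2$, and $\FPdim(X_i)=|d_i|\in\bZ$.
\end{itemize}
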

Galois-modular extensions provide computational control: 
if $\cB \mono \cM$ is Galois-modular, then the $\Gal(\bar{\bQ}/\bQ)$-action on $\cM$ restricts to one on $\cB$, and this restricted action often supplies a lot of information about $\cB$, just as the $\Gal(\bar{\bQ}/\bQ)$-actions on modular fusion categories have proven useful in studying modular fusion categories. Unlike the case of minimal modular extensions, which as mentioned in Remark~\ref{remark:mne} do not always exist, Schopieray conjectures that Galois-modular extensions always exist \cite{SchopierayBIRS}. The goal of this paper is to prove Schopieray's conjecture in the pseudounitary  case:
\begin{mainthm}
  Every pseudounitary braided fusion category $\cB$ admits a Galois-modular extension.
\end{mainthm}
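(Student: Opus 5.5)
The plan is to reduce to the minimal nondegenerate extension problem. We will not insist that the centralizer be symmetric; we will only ask that it be \emph{integral}, and we will pay for the obstruction $O_4$ of Remark~\ref{remark:mne} by enlarging the symmetric centre.

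We use pseudounitarity in two places. First, $\cB$ carries its canonical spherical structure, so every nondegenerate extension we build is canonically modular and the Galois criterion of \cite{2111.05228} applies. Second, $Z_2(\cB)$ is a pseudounitary symmetric category, so by Deligne it is either $\Rep(G)$ or $\Rep(G,z)$ (super-Tannakian), with positive dimensions.

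\textbf{Step 1: trivialize the obstruction by enlarging the group.} Treat the Tannakian case $Z_2(\cB)\cong\Rep(G)$ first. Choose a finite group $H$ with a surjection $\pi:H\epi G$ such that the pullback $\pi^*O_4(\cB)\in\H^4(H;\bC^\times)$ is trivial. Such $H$ exists because every class in the cohomology of a finite group with coefficients in $\bC^\times$ dies on a suitable finite extension; one can induct on degree, killing the class first on a central extension and then on a further extension. Then form
$$\cB' := \cB\boxtimes_{\Rep(G)}\Rep(H).$$
This is the braided fusion category generated by $\cB$ and $\Rep(H)$ inside an appropriate ambient category. Concretely, it is $\cB\boxtimes\Rep(H)$ with the diagonal $\Rep(G)$ identified. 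It contains $\cB$ fully faithfully, satisfies $Z_2(\cB')\cong\Rep(H)$, and has $O_4(\cB')=\pi^*O_4(\cB)=1$. Verifying the identities $Z_2(\cB')\cong\Rep(H)$ and $O_4(\cB')=\pi^*O_4(\cB)$, i.e.\ naturality of the obstruction of \cite{MR4971769} under this base change, is the step I expect to be the main technical obstacle. By \cite{MR4971769} we then obtain a minimal nondegenerate extension $\cB'\mono\cM$, and $\cM$ is modular with its induced pseudounitary spherical structure.

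\textbf{Step 2: the centralizer of $\cB$ is integral.} Let $\cD:=Z_2(\cB\mono\cM)$. It contains $\Rep(H)$, since $\Rep(H)$ centralizes $\cB'$. Because $\cB'$ is generated by $\cB$ and $\Rep(H)$, the centralizer of $\Rep(H)$ inside $\cD$ is $Z_2(\cB'\mono\cM)$. By minimality and the dimension count this is exactly $\Rep(H)$. So $\Rep(H)\subset\cD$ is a Tannakian subcategory equal to its own centralizer in $\cD$. De-equivariantizing, $\cD_H$ is an $H$-crossed braided category whose trivial component is $\Rep(H)'_H=\Vec$. Since the grading is faithful and every component has the same Frobenius--Perron dimension as the trivial one, each component has dimension $1$ and is therefore pointed. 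Hence $\cD$, as an $H$-equivariantization of a pointed category, is integral (indeed group-theoretical). Thus $\cB\mono\cM$ is Galois, and Theorem 4.1.6 of \cite{2111.05228} identifies it as a Galois-modular extension.

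\textbf{Step 3: the super-Tannakian case.} Apply the same argument to the maximal Tannakian subcategory $\Rep(G/\langle z\rangle)$, or work directly with super-groups. After base change along a suitable $H\epi G$ killing the relevant obstruction, with \cite{MNE} handling the final $\sVec$ layer, one again obtains a minimal extension of some $\cB'\supset\cB$ with $Z_2(\cB')\cong\Rep(H,z)$. The same de-equivariantization argument shows that $\cD_H$ is a crossed extension of $\sVec$ by a group. It therefore has all simple dimensions in $\{1,\sqrt2\}$ in each component, and we must rule out the $\sqrt2$ (Majorana-type) components. My first attempt would be to enlarge $H$ once more so that the relevant components become even, e.g.\ by tensoring with an Ising-free auxiliary piece. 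I expect this, together with the naturality of $O_4$ in Step 1, to be where the real work lies.
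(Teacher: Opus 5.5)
Your Steps~1--2 are sound and are essentially the construction of Remark~\ref{remark:alternate}: your $\cB\boxtimes_{\Rep(G)}\Rep(H)$ is the paper's $(\cB_G)^{\tilde G}$. The identity $O_4(\cB')=\pi^*O_4(\cB)$ you worry about is just naturality of an obstruction class under pulling back the $G$-action, and the paper's main Tannakian argument avoids it anyway via Proposition~\ref{thm:coker} and the explicit integral categories $Z_1(\Vec^\psi[K])^G$. Your pointedness argument for $\cD_H$ is a tidy substitute for the classification of minimal nondegenerate extensions of $\Rep(K)$. Two small corrections are needed. Since $Z_2(\cD)=\Rep(G)\subset\Rep(H)$, the grading of $\cD_H$ is supported on $K=\ker(H\to G)$ and is not faithful on $H$; the count $\dim\cD_H=|H|/|G|=|K|$ still forces pointed components. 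Also, modularity of $\cM$ is not inherited from the spherical structure of $\cB$. It follows from Proposition~\ref{prop:pseudounitary} applied to $\cB$ and its integral (hence pseudounitary) centralizer.

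The genuine gap is Step~3, on two counts. First, you never deal with the action of $G$ on $\pi_0\MNE(\cB_G)$. A nontrivial permutation of the $16$ classes cannot be undone by pulling back along any surjection $H\epi G$. The paper rules this out using pseudounitarity (the $16$ extensions have distinct central charges, Proposition~\ref{prop:trivialaction}), and only then kills $O_2,O_3,O_4$ by enlarging the group.

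Second, a further enlargement of $H$ cannot remove Majorana components. Which components of $\cN_K$ are Majorana is recorded by the restriction to $K$ of the $\pi_1$-level part of the fixed-point data, a homomorphism $\chi\colon K\to\pi_1\MNE(\cB_G)\cong\bZ/2\bZ$. By the five-term exact sequence, $\chi$ transgresses to $O_2(\cB,\cM)\in\H^2(\rB G;\bZ/2\bZ)$, so $\chi\neq0$ for every choice of $H$ as soon as $O_2(\cB,\cM)\neq0$. What the argument needs is a choice of $\cM$ with $O_2(\cB,\cM)=0$, together with $\pi_1$-level data inflated from $G$.

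Be aware that the paper's one-line exclusion of Majorana objects does not supply this. The fact that a slightly degenerate $\cN$ has no simple $x$ with $f\otimes x\cong x$ is a statement about $\cN$, not about $\cN_K$. For instance, take $\cN=\mathrm{Ising}\boxtimes\sVec$ with $\Rep(\bZ/2\bZ)$ generated by the boson $\psi f$. It satisfies all the stated hypotheses ($\dim\cN=8=2|K|^2$), yet the free module on $\sigma$ is a Majorana object of $\cN_K$ and $\cN$ is not integral.

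This even occurs in the construction itself. Take $\cB=\sVec$ and $\tilde G=\bZ/2\bZ$. Then $\tilde\cB=\sVec\boxtimes\Rep(\bZ/2\bZ)$ has the minimal nondegenerate extension $\tilde\cM=\mathrm{Ising}\boxtimes\overline{\mathrm{Ising}}$, with $f\mapsto\psi$ and $\Rep(\bZ/2\bZ)$ generated by $\psi\bar\psi$; this is the gauged $e$--$m$ swap of the toric code, so $\chi\neq0$ although $O_2=0$. Here $Z_2(\cB\mono\tilde\cM)=\{1,\psi\}\boxtimes\overline{\mathrm{Ising}}$ is not integral. So your sense that this is where the real work lies is correct; the missing ingredient is control of $\chi$, not a larger $H$.
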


Our proof is not functorial but it is constructive in the sense that we produce such an extension subject to some noncanonical choices. The proof in the case when $Z_2(\cB)$ is \define{Tannakian} --- equivalent to $\Rep(G)$ for some finite group $G$ --- is given in  Section~\ref{sec:proof}. In this case, we can show more generally that even without the pseudounitarity assumption, $\cB$ admits a Galois nondegenerate extension; the pseudounitarity is used only to enhance this to a modular extension. In Section~\ref{sec:super} we generalize our proof to the  non-Tannakian case. In this case we use pseudounitarity at one step (Proposition~\ref{prop:trivialaction}) to even produce a Galois nondegenerate extension --- we are confident that pseudounitarity is not in fact necessary for this step, but we do not know a proof  that does not invoke an as-yet-underdeveloped ``higher Morita theory'' for fusion higher categories.

%\begin{remark}
%  A choice of Galois-modular extension $\cB \mono \cM$ selects in particular an action of $\Gal(\bar{\bQ}/\bQ)$ on the set of simple objects in $\cB$. It would be interesting to compute this action in some examples of our construction. We have not attempted this.
%\end{remark}

\section{The main construction}\label{sec:proof}

The goal of this section is to prove our Main Theorem for braided fusion categories with Tannakian centre. In other words, we are given a pseudounitary braided category $\cB$ and a braided equivalence $Z_2(\cB) \cong \Rep(G)$, and we wish to produce a pseudounitary modular extension $\cB \mono \cM$ such that $Z_2(\cB \mono \cM)$ is integral. 

For any symmetric fusion category $\cE$, write $\NBFC(\cE)$ for the set of isomorphism classes of pairs consisting of a braided fusion category $\cB$ and a braided equivalence $Z_2(\cB) \cong \cE$. Given $\cB, \cD \in \NBFC(\cE)$, let us write $\cB \sim \cD$ if there exists a nondegenerate extension $\cB \mono \cM$ with $Z_2(\cB \mono \cM) \cong \cD^{\mathrm{rev}}$ (the braiding-reversal of $\cD$). 
The relation $\sim$ is an equivalence relation. Indeed, recall from \cite{MR3022755} that $\NBFC(\cE)$ is a commutative monoid under $\boxtimes_{\cE}$ and has a quotient group $\Witt(\cE)$ in which one quotients by those categories of the form $Z_2(\cE \mono Z_1(\cF))$, where $\cF$ is fusion, $Z_1(\cF)$ is its Drinfel'd centre, and $\cE \mono Z_1(\cF)$ is a braided fusion inclusion. The construction $\cE \leadsto \Witt(\cE)$ is functorial (c.f.\ Remark~2.8 of \cite{MR3022755}) for symmetric monoidal functors between symmetric fusion categories, and so in particular the unit inclusion $\Vec \to \cE$ selects a canonical homomorphism $\Witt := \Witt(\Vec) \to \Witt(\cE)$.

\begin{proposition}\label{thm:coker}
$\cB \sim \cD$ if and only if they represent the same class in the cokernel of the canonical map $\Witt \to \Witt(\cE)$.
\end{proposition}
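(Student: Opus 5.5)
We must show that $\cB \sim \cD$ if and only if $[\cB] - [\cD] \in \Witt(\cE)$ lies in the image of $\Witt \to \Witt(\cE)$. The plan is to translate the existence of a nondegenerate extension with prescribed centralizer into a statement about $\boxtimes_\cE$-products, using the result from \cite{MR3022755} that in $\Witt(\cE)$ the inverse of $[\cB]$ is $[\cB^{\mathrm{rev}}]$. We then read off both directions from the description of the zero class, namely the classes of categories of the form $Z_2(\cE \mono Z_1(\cF))$. Throughout we use the standard double-centralizer facts for a nondegenerate $\cM$: $Z_2(Z_2(\cB\mono\cM)\mono\cM)=\cB$, and $\FPdim\cB\cdot\FPdim Z_2(\cB\mono\cM)=\FPdim\cM$.

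\emph{($\Rightarrow$)} Suppose $\cB \mono \cM$ is nondegenerate with $\cC := Z_2(\cB\mono\cM)\cong\cD^{\mathrm{rev}}$. Both $\cB$ and $\cC$ contain $\cE$, and they centralize each other, so tensor product gives a braided functor $\cB\boxtimes\cC\to\cM$. Its kernel is the diagonal copy of $\cE$, and it descends to a braided embedding $\cB\boxtimes_\cE\cC\mono\cM$. Here $\boxtimes_\cE$ is the product of \cite{MR3022755}, formed by de-equivariantizing $\cB\boxtimes\cC$ along the diagonal $\cE$; in the super case one uses the appropriate variant. A dimension count using the FPdim identity above shows that the image has centralizer exactly $\cE$.

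Next, $\cM$ is Witt-trivial up to an element of $\Witt$. Indeed, $\cM\boxtimes\cM^{\mathrm{rev}}\cong Z_1(\cM)$, so $[\cM]\in\Witt$. Then $\cM\boxtimes\cE$, viewed in $\NBFC(\cE)$, is the image of $[\cM]$ under $\Witt\to\Witt(\cE)$. Since $\cB\boxtimes_\cE\cC$ is an $\cE$-nondegenerate subcategory of $\cM$, we have $\cM\boxtimes\cE\simeq(\cB\boxtimes_\cE\cC)\boxtimes_\cE(\text{its centralizer in } \cM\boxtimes\cE)$. That centralizer is $Z_2(\cE\mono\cM\boxtimes\cE)$, and it is Witt-trivial because $\cE$ embeds in $\cM\boxtimes\cE$, which sits inside $Z_1$ of something. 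The most direct way to see this is the criterion of \cite{MR3022755}: a class in $\Witt(\cE)$ is trivial exactly when the category is a centralizer of $\cE$ in some Drinfeld centre. We verify it by realizing $\cM\boxtimes\cM^{\mathrm{rev}}=Z_1(\cM)$ and taking the centralizer of $\cE$. Hence $[\cB]+[\cD^{\mathrm{rev}}]\equiv$ image of $\pm[\cM]$, that is, $[\cB]-[\cD]\in\operatorname{im}(\Witt)$.

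\emph{($\Leftarrow$)} Suppose $[\cB]-[\cD]$ is the image of $[\cN]$ with $\cN$ nondegenerate. Then $[\cB\boxtimes_\cE\cD^{\mathrm{rev}}\boxtimes\cN^{\mathrm{rev}}]=0$ in $\Witt(\cE)$. By the criterion of \cite{MR3022755} there is a fusion $\cF$ with $\cB\boxtimes_\cE\cD^{\mathrm{rev}}\boxtimes\cN^{\mathrm{rev}}\cong Z_2(\cE\mono Z_1(\cF))$. Since $Z_1(\cF)$ is nondegenerate, we obtain $\cB\mono Z_1(\cF)$, and its centralizer contains $\cD^{\mathrm{rev}}\boxtimes\cN^{\mathrm{rev}}$ (more precisely $\cD^{\mathrm{rev}}\boxtimes_\cE(\cE\boxtimes\cN^{\mathrm{rev}})$). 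Computing FPdims shows the centralizer is exactly $\cD^{\mathrm{rev}}\boxtimes\cN^{\mathrm{rev}}$. We then strip off $\cN^{\mathrm{rev}}$. It is a nondegenerate subcategory of the nondegenerate $\cM'=Z_1(\cF)$, so Müger decomposition gives $\cM'\cong\cN^{\mathrm{rev}}\boxtimes\cM$ with $\cM=Z_2(\cN^{\mathrm{rev}}\mono\cM')$. Both $\cB$ and $\cD^{\mathrm{rev}}$ centralize $\cN^{\mathrm{rev}}$, so they land in $\cM$, and the centralizer of $\cB$ in $\cM$ is $\cD^{\mathrm{rev}}$. Thus $\cB\sim\cD$.

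The main obstacle is the bookkeeping around $\boxtimes_\cE$ versus $\boxtimes$. In particular, we must check that the subcategory generated by $\cB$ and its centralizer is genuinely $\cB\boxtimes_\cE\cC$ with the correct braided identification of the two copies of $\cE$, including the reversal coming from $\cD^{\mathrm{rev}}$. We must also make sure that the equivalences $Z_2(\cB)\cong\cE$ are respected when recording classes in $\NBFC(\cE)$. I would handle this by working with de-equivariantization by the maximal Tannakian part, checked via FPdim counts, and treat the $\sVec$ part by the standard fermionic variant.
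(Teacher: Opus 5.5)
Your proof is correct in substance, but it takes a lower-level route than the paper. The paper treats as a black box the criterion that $\mathcal X\in\NBFC(\cE)$ admits a minimal nondegenerate extension iff $[\mathcal X]$ lies in the image of $\Witt\to\Witt(\cE)$ (Theorem~3.2 of \cite{MR4560997} and Proposition~4.1 of \cite{MNE}). It then observes that $\cB\sim\cD$ iff $\cB\boxtimes_\cE\cD^{\mathrm{rev}}$ has such an extension:
\begin{itemize}
\item in one direction, take the composite $\cB\mono\cB\boxtimes_\cE\cD^{\mathrm{rev}}\mono\cM$;
\item in the other, take the subcategory generated by $\cB$ and its centralizer. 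Your first paragraph of ($\Rightarrow$) is exactly this step.
\end{itemize}
You instead reprove both halves of that criterion inline. The ``only if'' half comes from $Z_1(\cM)\simeq\cM\boxtimes\cM^{\mathrm{rev}}$. The ``if'' half comes from representing the trivial class by some $Z_2(\cE\mono Z_1(\cF))$ and then M\"uger-splitting off the nondegenerate factor $\cN^{\mathrm{rev}}$. Your ($\Leftarrow$) is fine as written.

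What your route buys is transparency: the cited existence criterion becomes a formal consequence of the Witt-group theory of \cite{MR3022755} plus the double centralizer theorem. What it costs is an extra input you must cite. You need that a Witt-trivial category is \emph{literally} of the form $Z_2(\cE\mono Z_1(\cF))$, not merely stably equivalent to one. This is available in \cite{MR3022755}, but it is a genuine theorem, not a formality. The paper avoids it by citing the packaged result.

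One sentence in your ($\Rightarrow$) is wrong and should be deleted: the claimed splitting $\cM\boxtimes\cE\simeq(\cB\boxtimes_\cE\mathcal C)\boxtimes_\cE(\text{centralizer in }\cM\boxtimes\cE)$.
\begin{itemize}
\item The copy of $\cE$ inside $\mathcal A:=\cB\boxtimes_\cE\mathcal C\subset\cM$ is not the central copy $\Vec\boxtimes\cE$ of $\cM\boxtimes\cE$, so no relative M\"uger decomposition applies.
\item Concretely, the centralizer of $\mathcal A\boxtimes\Vec$ in $\cM\boxtimes\cE$ is $\cE\boxtimes\cE$.
\item The dimensions already fail: $\dim\mathcal A\cdot\dim(\cE\boxtimes\cE)/\dim\cE=\dim\cM$, whereas $\dim(\cM\boxtimes\cE)=\dim\cM\cdot\dim\cE$.
\end{itemize}

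The correct replacement is the computation you gesture at afterwards, done directly:
\begin{itemize}
\item First, $Z_2(\mathcal A\mono\cM)=Z_2(\cB\mono\cM)\cap Z_2(\mathcal C\mono\cM)=\mathcal C\cap\cB=Z_2(\cB)=\cE$.
\item By the double centralizer theorem, $Z_2(\cE\mono\cM)=\mathcal A$.
\item Hence the centralizer of $\cE\boxtimes\Vec$ in $Z_1(\cM)\simeq\cM\boxtimes\cM^{\mathrm{rev}}$ is $\mathcal A\boxtimes\cM^{\mathrm{rev}}\simeq\mathcal A\boxtimes_\cE(\cE\boxtimes\cM^{\mathrm{rev}})$, which is Witt-trivial by definition.
\end{itemize}
Thus $[\cB]-[\cD]=[\mathcal A]=[\cE\boxtimes\cM]$ is the image of $[\cM]$. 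With this fix, this direction uses only the definition of $\Witt(\cE)$.
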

\begin{proof}
  By
  Theorem~3.2 of \cite{MR4560997} (which handles the ``if'' direction) and  Proposition~4.1 of \cite{MNE} (which handles the ``only if'' direction), a braided fusion category $\cB \in \NBFC(\cE)$ admits a minimal nondegenerate extension if and only if its Witt class $[\cB] \in \Witt(\cE)$ is in the image of the map $\Witt \to \Witt(\cE)$. Suppose that $\cB$ and $\cD$ represent the same class in $\operatorname{coker}(\Witt \to \Witt(\cE))$. Then $\cB \otimes_\cE \cD^{\mathrm{rev}}$ represents the trivial class and so admits a minimal nondegenerate extension $\cM$; the composite extension $\cB \mono \cB \otimes_\cE \cD^{\mathrm{rev}} \mono \cM$ shows that $\cB \sim \cD$. Conversely, if there is an extension $\cB \mono \cM$ with centralizer $\cD$, then $\cB \otimes_\cE \cD^{\mathrm{rev}} \mono \cM$ is minimal nondegenerate.
\end{proof}

When $\cE = \Rep(G)$, then a completely concrete description is available. Write $\NBFC := \NBFC(\Vec)$ and consider the map $\NBFC(\cE) \to \NBFC$ that sends $\cB$ to its deequivariantization $\cB_G$ as in \cite[Theorem 4.18]{MR2183279}. 
Let $\NBFC^\triv(\Rep(G))$ denote the subset of $\NBFC(\Vec)$ on those $\cB$ for which $\cB_G$ is Witt-trivial, and $\Witt^\triv(\Rep(G))$ its image in $\Witt(\cE)$. Then we find a canonical splitting $\Witt(\Rep(G)) = \Witt \oplus \Witt^\triv(\Rep(G))$. The following is the main theorem of \cite{2601.09060} (resolving Conjecture~4.3 of \cite{MNE}):

\begin{proposition}[\cite{2601.09060}]\label{thm:JOY}
   For any finite group $G$, there is a canonical isomorphism\\[4pt] \mbox{} \hfill $O_4 : \Witt^\triv(\Rep(G)) \to \H^4(\rB G; \bC^\times).$  \qed
\end{proposition}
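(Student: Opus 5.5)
The plan is to build $O_4$ from the classical obstruction theory for $G$-crossed extensions, and then check four things in turn: it is well defined, it is a homomorphism, it is injective, and it is surjective.

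\emph{Construction.} Take $\cB \in \NBFC^\triv(\Rep(G))$. Its de-equivariantization $\cB_G$ is nondegenerate and carries a braided $G$-action, and $\cB$ is recovered as $(\cB_G)^G$. A minimal nondegenerate extension of $\cB$ is the same as a $G$-crossed braided extension of $\cB_G$. By Etingof--Nikshych--Ostrik, such an extension is obstructed in two stages: first by a class $O_3 \in \H^3(G; \mathrm{Inv}(\cB_G))$, and then, once compatible choices are made, by a class in $\H^4(G;\bC^\times)$.

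The Witt-triviality of $\cB_G$ is what makes the primary obstruction disappear canonically. Write $\cB_G \simeq Z_1(\cF)$. Then the $G$-action on $\cB_G$ can be encoded by a $G$-action on the Morita class of $\cF$, equivalently by a map $\rB G \to \rB\mathrm{BrPic}$ that lifts through the space of fusion categories Morita equivalent to $\cF$. Using this, I would argue that $O_3$ vanishes and that the remaining $\H^4$-class is independent of the choices made. That independent class is $O_4(\cB)$.

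\emph{Well-definedness and additivity.} I will show the following.
\begin{itemize}
\item $O_4(\cB \boxtimes_{\Rep(G)} \cD) = O_4(\cB)\,O_4(\cD)$, because obstruction classes are multiplicative under Deligne products over $\Rep(G)$.
\item $O_4$ vanishes on the relations $Z_2(\Rep(G) \mono Z_1(\cF))$, since these admit minimal nondegenerate extensions by construction (namely $Z_1(\cF)$ itself).
\end{itemize}
Together these give a homomorphism $\Witt^\triv(\Rep(G)) \to \H^4(\rB G;\bC^\times)$.

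\emph{Injectivity.} Suppose $O_4(\cB)=1$. Then $\cB$ has a minimal nondegenerate extension. By the Theorem~3.2 of \cite{MR4560997} and Proposition~4.1 of \cite{MNE} equivalence used in the proof of Proposition~\ref{thm:coker}, $[\cB]$ lies in the image of $\Witt \to \Witt(\Rep(G))$. The splitting $\Witt(\Rep(G)) = \Witt \oplus \Witt^\triv(\Rep(G))$ then forces $[\cB]=0$ in $\Witt^\triv(\Rep(G))$.

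\emph{Surjectivity.} This is the main obstacle: for every $\alpha \in \H^4(G;\bC^\times)$ I must produce $\cB$ with $O_4(\cB)=\alpha$. My first attempt is a ``fake Dijkgraaf--Witten'' construction.
\begin{enumerate}
\item Find a finite abelian group $A$ and a $G$-action on the Witt-trivial category $Z_1(\Vec_A)$ whose $\H^4$-obstruction is $\alpha$.
\item To arrange this, choose an extension, or a 2-group $\mathbb{G}$ with $\pi_1=G$ and $\pi_2=A$, over which $\alpha$ becomes a transgression of a class in $\H^3$. Such a choice exists because any class in $\H^4(G;\bC^\times)$ dies on some finite 2-group extension of $G$.
\item Realize the $\mathbb{G}$-data as a braided autoequivalence action on $Z_1(\Vec_A)$ with twisted associator.
\item Check that the resulting obstruction is $\alpha$ by comparing with the Lyndon--Hochschild--Serre or Postnikov description of $\H^4(\rB\mathbb{G})$.
\end{enumerate}
If this direct realization fails, I would fall back on embedding $G$ in a group for which $\alpha$ is known to be realized and inducing along that embedding.
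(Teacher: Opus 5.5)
\textbf{Surjectivity has a genuine gap.} Surjectivity is exactly the new content of the cited result: the paper attributes ``every $\alpha$ occurs as some $O_4(\cB)$'' to \cite{2601.09060} (Remark~\ref{remark:mne}). Your well-definedness and injectivity steps are comparatively formal and essentially fine, apart from the unproved multiplicativity of $O_4$ under $\boxtimes_{\Rep(G)}$. Steps (2)--(4), however, supply no mechanism.
\begin{itemize}
\item For a $2$-group with $\pi_1=G$, $\pi_2=A$, the fibre of its classifying space over $\rB G$ is $\rB^2A$, and $\H^3(\rB^2A;\bC^\times)=0$. So there is no $\H^3$ class to transgress.
\item There is also nothing in neutral degree that could serve as the fusion $1$-category whose Drinfel'd centre carries the action. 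Trivializing $\alpha$ on such a $2$-group does not by itself produce a braided $G$-action on $Z_1(\Vec_A)$ with anomaly $\alpha$.
\item ``Realize the data as a braided autoequivalence action with twisted associator'' is precisely the step that needs an idea, and it is left blank.
\item The fallback runs the wrong way. Restricting an $H$-action along $G\mono H$ reaches only classes restricted from $\H^4(\rB H;\bC^\times)$, and it presupposes the problem already solved for $H$. Inducing moves actions from $G$ up to $H$, not down.
\end{itemize}

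\textbf{The missing idea} is the construction the paper reviews (Proposition~5.8 of \cite{2601.09060}).
\begin{enumerate}
\item Take an honest finite group extension $K\to\tilde G\epi G$ on which $\alpha$ becomes exact \cite{MR1296597}, and pick $\psi$ with $\d\psi=\alpha|_{\tilde G}$.
\item Use $\psi$ itself as the associator on $\Vec[\tilde G]$. The result $\Vec^\psi[\tilde G]$ is faithfully $G$-graded, its pentagon fails by exactly $\alpha$, and its neutral component $\Vec^\psi[K]$ is an honest fusion category, since $\psi|_K$ is closed.
\item \S8 of \cite{MR2677836} identifies such $G$-quasifusion extensions of $\cF_e$ with $G$-actions on $Z_1(\cF_e)$, the pentagonator defect becoming $O_4$. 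Hence $Z_1(\Vec^\psi[K])^G$ has $O_4=\alpha$.
\end{enumerate}
Your step~1 target (abelian $A$) does appear reachable this way. Shifting dimension twice through coinduced modules writes $\alpha=\langle\lambda\cup e\rangle$ with $e\in\H^2(G;A)$ and $\lambda\in\H^2(G;\hat A)$ for a finite $G$-module $A$. Then $\alpha$ dies on the extension classified by $e$, with a primitive vanishing on $A$. But passing from that to an action on $Z_1(\Vec_A)$ is again exactly the quasifusion step.

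\textbf{The construction paragraph also misplaces the role of Witt-triviality.}
\begin{itemize}
\item $O_3$ vanishes for every $\cB\in\NBFC(\Rep(G))$, because the braided $G$-action on $\cB_G$ is part of the data. Thus $O_4$ is the single, choice-free obstruction to lifting $\rB G\to\tau_{\le 2}\rB\underline{\underline{\mathrm{Pic}}}(\cB_G)$ to $\rB\underline{\underline{\mathrm{Pic}}}(\cB_G)$.
\item An actual map $\rB G\to\rB\underline{\underline{\mathrm{BrPic}}}(\cF)$ would be a $G$-extension of $\cF$, which would force $O_4=1$.
\end{itemize}
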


For us, the most interesting part of this Proposition is the rather explicit construction, given in Proposition~5.8 of \cite{2601.09060}, of an element $\cB_\omega \in \NBFC^\triv(\Rep(G))$ for each $\omega \in \H^4(\rB G; \bC^\times)$. 
The specifics of the construction will almost immediately imply our Main Theorem, and so we review the construction here. 

Fix a cocycle representing $\omega$ (and abusively write $\omega$ also for this cocycle representative). By the main theorem of \cite{MR1296597}, there exists a finite group $\tilde{G}$ with a surjection $\tilde{G} \to G$ such that the restriction $\omega|_{\tilde{G}}$ is exact; choose a primitive $\psi$, i.e.\ a 3-cochain on $\tilde{G}$ with $\d\psi = \omega$. An explicit and elementary construction of $(\tilde{G}, \psi)$ can be found for example in~\cite{2601.04374}.

Now consider the semisimple category $\Vec[\tilde{G}]$ of $\tilde{G}$-graded vector spaces. We will equip it with its standard binary convolution product $\otimes : \Vec[\tilde{G}] \times \Vec[\tilde{G}] \to \Vec[\tilde{G}]$, but we will not use its standard associator. Instead, we decide to use the cochain $\psi$ as the associativity isomorphism $(X \otimes Y) \otimes Z \cong X \otimes (Y \otimes Z)$, and denote the resulting category ``$\Vec^\psi[\tilde{G}]$.'' This is not a monoidal category: the pentagon equation for the associator does not hold. Indeed, the pentagon computes $\d\psi = \omega \neq 1$.
Instead, we equip $\Vec^\psi[\tilde{G}]$ with its $G$-grading via the map $\tilde{G} \to G$; this $G$-grading is faithful because this map is a surjection. In this way, $\Vec^\psi[\tilde{G}]$ becomes \define{$G$-quasimonoidal with pentogonicity-defect $\omega$} in the sense of \S4 of \cite{2601.09060}: a faithfully-$G$-graded category with a binary multiplication and associator which satisfies a the obvious $\omega$-twisted version of the pentagon axiom. It is moreover rigid (the notion of ``rigid'' for monoidal categories does not use the pentagon axiom) and semisimple with simple unit, and we will abbreviate ``rigid semisimple $G$-quasimonoidal with simple unit'' to \define{$G$-quasifusion}.

Suppose that $\cF$ is a $G$-quasifusion category. Write $\cF_e \subset \cF$ for the neutrally-graded subcategory. The methods of \S8 of \cite{MR2677836} supply a categorical $G$-action on the Drinfel'd centre $Z_1(\cF_e)$ --- indeed, they show that categorical $G$-actions on $Z_1(\cF_e)$ are precisely the same as $G$-quasifusion extensions of $\cF_e$. Consider the equivariantization $Z_1(\cF_e)^G$. This is manifestly an element of $\NBFC^\triv(\Rep(G))$ (its equivariantization is $Z_1(\cF_e)$). The methods of \S8 of \cite{MR2677836} moreover identify $O_4(Z_1(\cF_e)^G) \in \H^4(\rB G; \bC^\times)$ with the pentogonicity defect of $\cF$.

In our case, the neutrally-graded component of $\cF = \Vec^\psi[\tilde{G}]$ is $\Vec^\psi[K]$, where $K:=\ker(\tilde{G} \to G)$; note that $\psi|_K$ is closed.

Now return to the original problem at hand. We start with $\cB \in \NBFC(\Rep(G))$. Compute its image $O_4(\cB) \in \H^4(\rB G; \bC^\times)$ under the composite $\NBFC(\Rep(G)) \to \Witt(\Rep(G)) = \Witt \oplus \H^4(\rB G; \bC^\times) \epi \H^4(\rB G; \bC^\times)$. Set $\omega := O_4(\cB)^{-1}$, and build $(\Vec^\psi[K])^G$ as above. Then by Propositions~\ref{thm:coker} and~\ref{thm:JOY}, there is a nondegenerate extension $\cB \mono \cM$ with $Z_2(\cB \mono \cM) \cong Z_1(\Vec^\psi[K])^G$. But $Z_1(\Vec^\psi[K])^G$ is manifestly integral. We have shown:

\begin{proposition}\label{prop:summarytannakian}
  Let $\cB$ be a braided fusion category with Tannakian centre. Then $\cB$ admits a Galois nondegenerate extension $\cB \mono \cM$. \qed
\end{proposition}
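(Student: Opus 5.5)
The plan is to find, inside $\NBFC(\Rep(G))$, an integral category $\cD$ that lies in the same class as $\cB$ in $\operatorname{coker}(\Witt \to \Witt(\Rep(G)))$. Proposition~\ref{thm:coker} then turns this into a nondegenerate extension whose centralizer is $\cD^{\mathrm{rev}}$, and that centralizer is integral. Fix a braided equivalence $Z_2(\cB) \cong \Rep(G)$ and regard $\cB$ as an element of $\NBFC(\Rep(G))$.

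First I describe the cokernel. By the canonical splitting $\Witt(\Rep(G)) = \Witt \oplus \Witt^\triv(\Rep(G))$ together with Proposition~\ref{thm:JOY}, the cokernel of $\Witt \to \Witt(\Rep(G))$ is identified with $\H^4(\rB G;\bC^\times)$ via $O_4$. So two elements of $\NBFC(\Rep(G))$ satisfy $\cB \sim \cD$ exactly when $O_4(\cB) = O_4(\cD)$. I need to check that the identification really goes through the projection onto the second summand, meaning the $\Witt$-summand is precisely the image of the canonical map. That is how the splitting is defined, since a class lies in $\Witt^\triv$ exactly when its deequivariantization is Witt-trivial.

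Second, I realize the needed class with an integral category. Set $\omega := O_4(\cB)^{-1}$ (or $O_4(\cB)$ itself, depending on the sign convention discussed below). Choose $\tilde G \epi G$ with $\omega|_{\tilde G}$ exact, and choose $\psi$ with $\d\psi = \omega$. Then form the $G$-quasifusion category $\Vec^\psi[\tilde G]$, whose neutral component is $\Vec^\psi[K]$. By the results of \S8 of \cite{MR2677836} as recalled above, $\cD := Z_1(\Vec^\psi[K])^G \in \NBFC^\triv(\Rep(G))$ has $O_4(\cD) = \omega$. This $\cD$ is integral: $Z_1(\Vec^\psi[K])$ is the centre of a pointed category, so its objects have integer dimensions, and equivariantization only multiplies Frobenius--Perron dimensions by the integer sizes of orbits and the integer dimensions of projective representations.

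Third, I match the signs. Proposition~\ref{thm:coker} produces an extension $\cB \mono \cM$ with centralizer $\cD'^{\mathrm{rev}}$ whenever $\cB \sim \cD'$. So I want $\cD'^{\mathrm{rev}}$ to be integral and $O_4(\cD') = O_4(\cB)$. Reversing the braiding preserves integrality. It also inverts $O_4$, because $\mathrm{rev}$ represents the inverse in the Witt group, which matches the choice $\omega = O_4(\cB)^{-1}$. Concretely, I take $\cD' := \cD^{\mathrm{rev}}$, so that the centralizer is $\cD$ itself, which is integral. Alternatively I can take $\omega = O_4(\cB)$ and use $\cD' = \cD$; either convention works because integrality is insensitive to the choice.

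The main obstacle is the first step: justifying that $O_4$ descends to an isomorphism on the cokernel and is compatible with $\sim$. This relies entirely on the quoted results of \cite{2601.09060}, \cite{MR4560997} and \cite{MNE}, so the work is bookkeeping rather than new mathematics.
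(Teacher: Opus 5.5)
Your proposal is correct and follows the paper's argument essentially verbatim. You identify $\operatorname{coker}(\Witt \to \Witt(\Rep(G)))$ with $\H^4(\rB G;\bC^\times)$ via the splitting and $O_4$, realize $O_4(\cB)^{-1}$ by the integral category $Z_1(\Vec^\psi[K])^G$ built from a trivialization of $\omega$ on $\tilde G \epi G$, and then apply Proposition~\ref{thm:coker}. Your explicit tracking of the $\mathrm{rev}$ convention ($\mathrm{rev}$ inverts $O_4$, so with $\omega = O_4(\cB)^{-1}$ the centralizer is $\cD$ itself) makes precise bookkeeping that the paper leaves implicit.
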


To complete the proof of our Main Theorem, it remains to make this extension $\cB \mono \cM$ modular, i.e.\ to supply a spherical structure on $\cM$ extending the spherical structure on $\cB$. This author is not aware of any general results about modular extensions analogous to Proposition~\ref{thm:coker} about nondegenerate extensions. Instead, we use pseudounitarity to sidestep the issue.
Recall the following notion from \cite{MR2183279}: a fusion category is \define{pseudounitary} when it admits a (necessarily unique) ``positive'' spherical structure for which quantum and Frobenius--Perron dimensions agree.
 Note that $Z_1(\Vec^\psi[K])^G$ is manifestly pseudounitary. The following is essentially Corollary~1.3 of \cite{MNE}:

\begin{proposition}\label{prop:pseudounitary}
  Suppose that $\cB \mono \cM$ is a nondegenerate extension such that $\cB$ and $Z_2(\cB \mono \cM)$ are both pseudounitary. Then $\cM$ is pseudounitary.
\end{proposition}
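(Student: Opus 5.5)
The plan is to use the criterion of \cite{MR2183279}: a fusion category $\cC$ is pseudounitary exactly when its global dimension $\dim(\cC)$ equals $\FPdim(\cC)$. We always have $|\dim(\cC)| \le \FPdim(\cC)$ for every Galois conjugate (Proposition~8.22 of \cite{MR2183279}), with equality $\dim(\cC) = \FPdim(\cC)$ characterizing pseudounitarity. So it suffices to show $\dim(\cM) = \FPdim(\cM)$.

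The first ingredient is Müger's double-centralizer theorem for nondegenerate $\cM$. For any fusion subcategory $\cB \subset \cM$ we have $\dim(\cB)\cdot\dim(Z_2(\cB \mono \cM)) = \dim(\cM)$. The same argument, run with Frobenius--Perron dimensions (\cite{MR2609644}, Theorem~3.14), gives $\FPdim(\cB)\cdot \FPdim(Z_2(\cB\mono\cM)) = \FPdim(\cM)$. I will check that the global-dimension version holds without any spherical structure. This is the case because $\dim$ is defined via squared norms $|x|^2$, and the DGNO proof of the dimension formula works at that level.

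With both formulas in hand, the hypotheses give $\dim(\cB) = \FPdim(\cB)$ and $\dim(Z_2(\cB\mono\cM)) = \FPdim(Z_2(\cB\mono\cM))$. Multiplying yields $\dim(\cM) = \FPdim(\cM)$, so $\cM$ is pseudounitary.

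Finally, I will note a compatibility point needed later in the paper. The canonical positive spherical structure on $\cM$ restricts on $\cB$ to the positive one, since its quantum dimensions are positive there and $\FPdim$ is the unique positive character. Hence the embedding $\cB \mono \cM$ is a modular extension of $\cB$ with its pseudounitary spherical structure.

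The main obstacle is confirming that the multiplicativity $\dim(\cB)\dim(Z_2(\cB\mono\cM)) = \dim(\cM)$ is available in the required generality, namely for non-spherical $\cM$ and non-pseudounitary intermediate stages. If that is problematic, my fallback is to note that $\dim$ is invariant under passing to the pivotal cover, or to cite Corollary~1.3 of \cite{MNE} directly.
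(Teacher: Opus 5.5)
Your proposal is correct and is essentially the paper's proof: pseudounitarity is characterized by $\dim = \FPdim$, both $\dim$ and $\FPdim$ are multiplicative over a subcategory and its centralizer in a nondegenerate category, and multiplying the two hypothesis equalities gives $\dim(\cM) = \FPdim(\cM)$. The generality you worried about is already covered. The cited multiplicativity results are stated for arbitrary braided fusion categories, with global dimension defined through squared norms, so no spherical structure on $\cM$ is needed. The paper simply cites them.
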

\begin{proof}
  For a  fusion category $\cB$, its \define{Frobenius--Perron dimension} $\FPdim(\cB)$  is the sum of the squares of the Frobenius--Perron dimensions of all of its simple objects. Semisimplicity of $\cB$ provides a second notion of ``dimension,'' or more precisely ``squared dimension,'' of a simple object: for each simple object $x \in \cB$, semisimplicity implies that the left and right duals of $x$ are noncanonically isomorphic; choose such an isomorphism, and use it to define the left and right dimensions of $x$; the \define{squared dimension} $\dim^2 (x)$ is the product of its left and right dimensions (the choice of isomorphism cancels). The \define{global dimension} $\dim(\cB)$ of $\cB$ is the sum of the squared dimensions of the simple objects in $\cB$. 
  
  The squared dimension is never more than the square of the Frobenius--Perron dimension, and together with some statements about when this bound is saturated one arrives at 
  Proposition~8.23 of \cite{MR2183279}: 
  a braided fusion category $\cB$ is pseudounitary if and only if its global dimension $\dim (\cB)$ and Frobenius--Perron dimension $\FPdim (\cB)$ agree. 
  
  But on the other hand Theorems~3.10 and~3.14 of \cite{MR2609644} show that if $\cB \mono \cM$ is a nondegenerate extension, then
  \begin{gather*}
    \dim (\cM) = \dim (\cB) \times \dim Z_2(\cB \mono \cM), \\
    \FPdim (\cM) = \FPdim (\cB) \times \FPdim Z_2(\cB \mono \cM).
  \end{gather*}
  The result is then immediate.
\end{proof}

This proves our Main Theorem in the Tannakian case.

\begin{remark}\label{remark:alternate}
  Our proof of Proposition~\ref{prop:summarytannakian} is slightly nonconstructive: we show that $\cB$ admits a nondegenerate extension $\cM$ with $Z_2(\cB \mono \cM) \cong Z_1(\Vec^\psi[K])^G$ by showing that these categories represent opposite classes in $\operatorname{coker}(\Witt \to \Witt(\Rep(G)) \cong \H^4(\rB G; \bC^\times)$; to actually choose such an $\cM$ requires identifying these classes coherently.
  Let us now sketch a more categorical construction.

  As above, let $\cB$ be braided fusion with $Z_2(\cB) \cong \Rep(G)$. Deequivariantize to $\cB_G \in \NBFC$. The general theory of (de)equivariantization supplies a categorical action of $G$ on $\cB_G$ and an equivalence $\cB \cong (\cB_G)^G$. As above, choose an extension $K \to \tilde{G} \epi G$ with a trivialization of $O_4(\cB)|_{\tilde{G}}$. Now consider equivariantizing $\cB_G$ with respect to the induced action of $\tilde{G}$ --- let us call the result $\tilde{\cB} := (\cB_G)^{\tilde{G}}$. Then $O_4(\tilde{\cB}) = O_4(\cB)|_{\tilde{G}}$ is trivialized, and this trivialization selects a minimal nondegenerate extension:
  $$  \tilde\cB \mono \cM, \quad Z_2(\tilde\cB \mono \cM) \cong \Rep(\tilde{G}). $$
  There is a canonical inclusion $\cB \cong (\cB_G)^G \mono (\cB_G)^{\tilde{G}} = \tilde{\cB}$ (any categorical $G$-fixed point restricts along $\tilde G \to G$ to a categorical $\tilde{G}$-fixed point), leading to a composite nondegenerate extension
  $$ \cB \mono \tilde\cB \mono \cM. $$
  
  To understand $Z_2(\cB \mono \cM)$, note first that its centre matches the centre $Z_2(\cB) \cong \Rep(G)$ of $\cB$. Thus we do know that $Z_2(\cB \mono \cM)$ is the $G$-equivariantization of its $G$-deequivariantization, and so the interesting thing is to understand the nondegenerate braided fusion category $Z_2(\cB \mono \cM)_G$.
  
  There is a braided (but not central) inclusion $\Rep(\tilde G) \cong Z_2(\tilde\cB \mono \cM) \mono Z_2(\cB \mono \cM)$, leading to a braided (but not central) inclusion
  $$ \Rep(K) \cong \Rep(\tilde G)_G \mono Z_2(\cB \mono \cM)_G.$$
  On the other hand, an easy dimension computation shows that $\dim(Z_2(\cB \mono \cM)_G) = |K|^2$. In other words, $\Rep(K) \mono Z_2(\cB \mono \cM)_G$ is a minimal nondegenerate extension.
  
  But, for any finite group $K$, the minimal nondegenerate extensions of $\Rep(K)$ are completely classified in \cite{MR1923177} (see \S4.3 of \cite{MR3613518} for a nice treatment): they are precisely the categories of the form $Z_1(\Vec^\psi[K])$ for $\psi \in \H^3(\rB K; \bC^\times)$.
  
  Thus $Z_2(\cB \mono \cM) \cong Z_1(\Vec^\psi[K])^G$ for some $\psi$ and some action of $G$. One could work more to identify these data with the data in our earlier construction. If the goal is only to prove our Main Theorem, we do not need to: $Z_1(\Vec^\psi[K])$ and hence $Z_1(\Vec^\psi[K])^G$ are integral, and so $\cB \mono \cM$ is Galois.
\end{remark}

\section{The non-Tannakian case} \label{sec:super}

The goal of this section is to prove our Main Theorem in the case when $Z_2(\cB)$ is non-Tannakian. 
This case is slightly more difficult because there is not a clean statement like Proposition~\ref{thm:JOY} in the non-Tannakian case --- \S4.4 of \cite{MNE} sketches what the statement should be, but details are lacking. That said, significant partial results are available in \cite{MR4971769}, and these will suffice for our purposes. Our construction in this section will be an analogue of the construction from Remark~\ref{remark:alternate} rather than from Proposition~\ref{prop:summarytannakian} (which surely agree but we did not prove that they do).

Suppose that $\cB$ is a braided fusion category with non-Tannakian centre. The celebrated ``super fibre functor'' theorem of Deligne's \cite{MR1944506} implies that $Z_2(\cB)$ contains a unique maximal Tannakian subcategory $\Rep(G) \subset Z_2(\cB)$, and the deequivariantization $Z_2(\cB)_G$ is braided equivalent to $\sVec$. In particular, $\cB_G$ is a \define{slightly degenerate} braided fusion category: $Z_2(\cB_G) \cong \sVec$.

Write $\MNE(\cB_G)$ for the $2$-groupoid of minimal nondegenerate extensions of $\cB_G$.
The main theorem of \cite{MNE} states that $\MNE(\cB_G)$ is nonempty, and the construction in that paper gives a description of its homotopy groups (at any basepoint):
$$ |\pi_0 \MNE(\cB_G)| = 16, \quad \pi_1 \MNE(\cB_G) \cong \bZ/2\bZ, \quad \pi_2 \MNE(\cB_G) \cong \bZ/2\bZ.$$
The higher homotopy groups vanish because $\MNE(\cB_G)$ is a $2$-groupoid.

\begin{remark}\label{remark:LKWmext}
The main theorem of \cite{MR3613518} applied in this case states that $\pi_0 \MNE(\cB_G)$ is naturally a torsor for $\pi_0 \MNE(\sVec) \cong \bZ/16\bZ$, and the same arguments given therein but interpreted homotopically in fact show that the $\infty$-groupoid $\MNE(\sVec)$ is naturally monoidal and has $\MNE(\cB_G)$ as a torsor. (The paper \cite{MR3613518} states their results in terms of unitary (pre)modular categories. But the work applies without change to the purely algebraic setting of (non)degenerate braided fusion categories.) In particular the Postnikov $k$-invariant connecting $\pi_1\MNE(\cB_G)$ and $\pi_2\MNE(\cB_G)$ is independent of the choice of basepoint. We remark in passing that this $k$-invariant is nontrivial; we will not use it in the sequel.
\end{remark}

The categorical action of $G$ on $\cB_G$ induces an action on $\MNE(\cB_G)$. A fixed point of this action, if one exists, unpacks to a choice of minimal nondegenerate extension $\cB_G \mono \cM$ together with an extension of the $G$-action on $\cB_G$ to all of $\cM$.
The main theorem of \cite{MR4971769} asserts:
\begin{proposition}[Theorems~1.1 and~4.9 of \cite{MR4971769}]
  Suppose that $\cM \in \MNE(\cB_G)^G$ is a $G$-fixed point such that the corresponding $O_4$-obstruction $O_4(\cM) \in \H^4(\rB G; \bC^\times)$ is trivial. Choose a trivialization and construct the corresponding minimal nondegenerate extension $\cM^G \mono \cN$. Then the composite nondegenerate extension 
  $$ \cB \cong (\cB_G)^G \mono \cM^G \mono \cN $$
  is minimal, and all minimal nondegenerate extensions of $\cB$ are of this type. \qed
\end{proposition}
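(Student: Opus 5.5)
The statement has two halves: minimality of the composite extension, and exhaustiveness. I would prove the first by a dimension count and the second by running the construction backwards using centralizers and de-equivariantization.

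\emph{Minimality.} Write $\cC := \cM^G$. Since $\cM$ is nondegenerate with a categorical $G$-action, the standard equivariantization theory of \cite{MR2609644} gives $Z_2(\cC) \cong \Rep(G)$. Hence the minimal nondegenerate extension $\cC \mono \cN$ has centralizer $\Rep(G)$, and Theorem~3.14 of \cite{MR2609644} (as used in Proposition~\ref{prop:pseudounitary}) gives
$$\dim \cN = \dim(\cC)\,|G| = \dim(\cM)\,|G|^2.$$
Since $\cM$ is a minimal extension of $\cB_G$ with centralizer $\sVec$, we have $\dim \cM = 2\dim(\cB_G)$. Also $\dim \cB = |G|\dim(\cB_G)$. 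Therefore
$$\dim Z_2(\cB \mono \cN) = \frac{\dim \cN}{\dim \cB} = 2|G| = \dim Z_2(\cB).$$
On the other hand, $Z_2(\cB) \subset \cB$ centralizes $\cB$, so $Z_2(\cB) \subset Z_2(\cB\mono\cN)$. Equal global dimensions then force $Z_2(\cB\mono\cN) = Z_2(\cB)$, which is symmetric, so the composite is minimal.

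\emph{Exhaustiveness.} Let $\cB \mono \cN$ be any minimal nondegenerate extension, so $Z_2(\cB\mono\cN) = Z_2(\cB) \supset \Rep(G)$.
\begin{enumerate}
\item Set $\cC := Z_2(\Rep(G) \mono \cN)$. By the double centralizer theorem, $\cC \supset \cB$, $Z_2(\cC) = \Rep(G)$, and $\cC \mono \cN$ is a minimal nondegenerate extension of $\cC$.
\item De-equivariantize to get $\cM := \cC_G$. This is nondegenerate and carries a $G$-action with $\cM^G \cong \cC$.
\item The inclusion $\cB \subset \cC$ de-equivariantizes to a $G$-equivariant inclusion $\cB_G \mono \cM$. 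Its centralizer is $Z_2(\cB \mono \cC)_G = Z_2(\cB)_G \cong \sVec$, so $\cM \in \MNE(\cB_G)$, equipped with a $G$-action extending the one on $\cB_G$, i.e.\ a $G$-fixed point.
\item Because $\cM^G = \cC$ admits the minimal extension $\cN$, the obstruction $O_4(\cM)$ vanishes by the Tannakian criterion of \cite{MR4971769} recalled in Remark~\ref{remark:mne}. The extension $\cN$ corresponds to one of the trivializations of this obstruction.
\end{enumerate}
This recovers $\cB \mono \cN$ as $\cB \cong (\cB_G)^G \mono \cM^G \mono \cN$.

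\emph{Expected obstacle.} The main difficulty is coherence in step~3. One must check that the $G$-action on $\cC_G$ restricts on $\cB_G$ to the given action, as a fixed-point datum in the $2$-groupoid $\MNE(\cB_G)$ and not merely up to equivalence. One must also check that the identification $\cB \cong (\cB_G)^G \mono \cC$ agrees with the original inclusion. I would handle this by working with the relative (de)equivariantization $2$-functor between $\Rep(G)$-centred categories and $G$-crossed data. That functor is an equivalence of $2$-groupoids, so all the required compatibilities follow functorially rather than by hand.
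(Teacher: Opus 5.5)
The paper gives no argument of its own for this statement. It is quoted from Theorems~1.1 and~4.9 of \cite{MR4971769} and closed with \qed. Your proposal is therefore a genuinely different route: a self-contained derivation from the Tannakian theory of \cite{MR2609644,MR2677836}. It is correct, but three steps should be tightened.

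\emph{Minimality.} The count $\dim Z_2(\cB\mono\cN)=\dim\cN/\dim\cB=2|G|=\dim Z_2(\cB)$ is right. Your last step says a fusion subcategory of the same global dimension is everything. That needs every simple object to have strictly positive squared norm, which is Theorem~2.3 of \cite{MR2183279}; cite it.

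\emph{Exhaustiveness, step~1.} The double-centralizer bookkeeping checks out.

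\emph{Exhaustiveness, step~3.} The claim that centralizers commute with de-equivariantization needs a short argument.
\begin{enumerate}
\item The free-module functor $\mathcal C\to\mathcal C_G$ is braided, so $Z_2(\cB\mono\mathcal C)_G\subseteq Z_2(\cB_G\mono\mathcal C_G)$.
\item Equality follows from comparing dimensions. On one side, $\dim\cB\cdot\dim Z_2(\cB\mono\mathcal C)=|G|\dim\mathcal C$; this is the degenerate form of Theorem~3.14 of \cite{MR2609644}, since $\cB\cap Z_2(\mathcal C)=\Rep(G)$. On the other side, $\dim\cB_G\cdot\dim Z_2(\cB_G\mono\mathcal C_G)=\dim\mathcal C_G$.
\item You also need $Z_2(\cB\mono\mathcal C)=Z_2(\cB\mono\cN)\cap\mathcal C=Z_2(\cB)$.
\end{enumerate}

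\emph{Exhaustiveness, step~4.} Remark~\ref{remark:mne} only gives the \emph{existence} criterion. To say that $\cN$ ``corresponds to one of the trivializations'' you need the classification. De-equivariantize $\cN$ along its non-central $\Rep(G)$ to obtain a $G$-crossed braided category $\cN_G$. It is faithfully graded because $\cN$ is nondegenerate, and its neutral component is $\mathcal C_G=\cM$ with the given action. Such extensions with prescribed action are classified by trivializations of $O_4$ \cite{MR2677836}, and $\cN\cong(\cN_G)^G$.

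\emph{What each route buys.} Citing \cite{MR4971769} gives the paper brevity. It also offloads the coherence at the level of the $2$-groupoid $\MNE(\cB_G)$, which you defer to the relative (de)equivariantization $2$-equivalence. Your argument shows that, granted the Tannakian classification, the statement is essentially formal. The only input specific to the super-Tannakian case is $\dim\cM=2\dim\cB_G$. It is also the same technique (dimension count plus de-equivariantized centralizer) that the paper uses later to prove $\cB\mono\tilde\cM$ is Galois.
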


Thus the first step to constructing or obstructing a minimal nondegenerate extension of $\cB$ is to analyze the action of $G$ on $\MNE(\cB_G)$. In particular, constructing a fixed point can be reduced to a sequence of obstructions: one asks for a fixed point in $\pi_0 \MNE(\cB_G)$, then asks to lift this to the 1-truncation $\pi_{\leq 1}\MNE(\cB_G)$, and finally asks to lift this to the full 2-groupoid $\MNE(\cB_G)$.

The paper \cite{MR4971769} leaves open the possibility that $G$ acts nontrivially already on $\pi_0 \MNE(\cB_G)$. We do not have to worry about this possibility:

\begin{proposition}\label{prop:trivialaction}
  Suppose that $\cB$ is pseudounitary. Then the action of $G$ on $\pi_0 \MNE(\cB_G)$ is trivial.
\end{proposition}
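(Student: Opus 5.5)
We will show that pseudounitarity pins down each minimal nondegenerate extension of $\cB_G$ up to equivalence by invariants that the $G$-action cannot change.

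\textbf{Reduction to $\cB_G$.} First, $\cB_G$ is pseudounitary: de-equivariantization preserves pseudounitarity, since $\dim$ and $\FPdim$ both divide by $|G|$. The elements of $\pi_0\MNE(\cB_G)$ are then minimal nondegenerate extensions $\cB_G \mono \cM$ with $Z_2(\cB_G \mono \cM) \cong \sVec$. Since $\sVec$ is pseudounitary, Proposition~\ref{prop:pseudounitary} shows that every such $\cM$ is pseudounitary. It therefore carries a canonical positive spherical structure, making it modular and extending the positive spherical structure on $\cB_G$. This structure is unique, so any braided autoequivalence preserves it.

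\textbf{Separating the sixteen classes by a scalar invariant.} By Remark~\ref{remark:LKWmext}, $\pi_0\MNE(\cB_G)$ is a torsor for $\pi_0\MNE(\sVec) \cong \bZ/16\bZ$. The sixteen classes are distinguished by the central charge: tensoring with the $\nu$-th minimal extension of $\sVec$ (an Ising-type category for odd $\nu$, a pointed one for even $\nu$) multiplies the normalized Gauss sum $\tau(\cM)/\sqrt{\dim \cM}$ by $e^{2\pi i \nu/16}$. This is the standard computation underlying the $16$-fold way.

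Consequently, the positive multiplicative central charge $\xi(\cM) = \tau^+(\cM)/\sqrt{\dim\cM}$ is a complete invariant of $\pi_0\MNE(\cB_G)$. Here we use the positive square root, which makes sense because $\dim\cM > 0$.

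\textbf{The $G$-action preserves $\xi$.} An element $g \in G$ acts on $\MNE(\cB_G)$ by sending $\cB_G \mono \cM$ to the composite $\cB_G \xrightarrow{g^{-1}} \cB_G \mono \cM$. The target category $\cM$ is unchanged; only the embedding is twisted. Hence $\cM$, its canonical spherical structure, its twists and its Gauss sum are all literally unchanged, so $\xi$ is $G$-invariant. Since $\xi$ separates the points of $\pi_0\MNE(\cB_G)$, the action on $\pi_0$ is trivial.

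\textbf{Main obstacle.} The crux is the claim that $\xi$ separates the torsor. We must check that the $\bZ/16\bZ$-torsor structure acts on central charge freely and compatibly, i.e.\ that the torsor action is implemented by $\cM \mapsto (\cM \boxtimes \cN_\nu)$ followed by condensing the diagonal fermion, and that this operation multiplies $\xi$ by $\xi(\cN_\nu)$.

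This compatibility is where positivity is used. Without a canonical spherical structure, Gauss sums are defined only up to a sign from the choice of square roots of twists, and the argument would break down. For this step we would invoke the description in \cite{MR3613518}, whose proofs are carried out in exactly this pseudounitary/unitary setting, and verify that the fermion condensation preserves Gauss sums.
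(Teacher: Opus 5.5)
Your argument is correct and is essentially the paper's proof. Pseudounitarity of $\cB_G$ together with Proposition~\ref{prop:pseudounitary} gives every minimal nondegenerate extension a canonical central charge; the sixteen classes are separated by it (the paper cites Proposition~3.11 and Theorem~3.13 of \cite{MR3641612}, where you use the $\bZ/16\bZ$-torsor of \cite{MR3613518} and multiplicativity of $\xi$ under fermion condensation); and you make explicit, slightly more than the paper does, that the $G$-action only twists the embedding $\cB_G \mono \cM$ and leaves $\cM$ with its canonical spherical structure, hence its central charge, untouched.
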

If one assumes a robust Morita theory for higher fusion categories, then the pseudounitarity assumption on $\cB$ can be dropped; compare \S2.3 of \cite{MR4886204}.
\begin{proof}
  Since $\cB$ is pseudounitary, so is $\cB_G$ and hence (by Proposition~\ref{prop:pseudounitary}) so is every minimal nondegenerate extension of $\cB_G$. A pseudounitary nondegenerate category is modular and hence has a well-defined central charge $c \in \bQ/8\bZ$. Proposition~3.11 and Theorem~3.13 of \cite{MR3641612} imply that the 16 minimal nondegenerate extensions of $\cB_G$ are distinguished by their central charge. (The paper \cite{MR3641612} is written in terms of unitary categories, but the arguments apply in the pseudounitary case.) Thus they cannot be permuted by $G$.
\end{proof}

Thus each point $[\cM] \in \pi_0\MNE(\cB_G)$ is trivially fixed under the $G$-action. Pick $\cM$ and ask: is it fixed in the 1-truncation $\pi_{\leq 1}\MNE(\cB_G)$? By general obstruction theory, the obstruction is some specific class $$O_2(\cB,\cM) \in \H^2(\rB G; \pi_1(\MNE(\cB_G),\cM)) = \H^2(\rB G; \bZ/2\bZ).$$ In general, such obstructions live in twisted cohomology (twisted by an action of $G$ on $\pi_1$); conveniently, $\bZ/2\bZ$ has no automorphisms so we do not need to worry about this issue. 

Suppose that $O_2(\cB,\cM)$ is trivializable and choose a trivialization $\phi$. Then there is a second obstruction
$$O_3(\cB, \cM, \phi) \in \H^3(\rB G;\pi_2 \MNE(\cB_G)) = \H^3(\rB G; \bZ/2\bZ).$$
Since that exhausts the nontrivial homotopy groups of $\MNE(\rB G)$, trivializing both obstructions is the same as making $\cM$ into a $G$-fixed point. Finally, to build a minimal nondegenerate extension, one faces a final obstruction $O_4(\cB,\cM,\phi,\rho) \in \H^4(\rB G; \bC^\times)$ which depends on a choice of trivialization $\rho$ of $O_3(\cB,\cM,\phi)$. These obstructions are also identified in \cite{MR4971769}.

\begin{remark}
  As emphasized by the notation, the values of these obstructions can a priori depend on the choice of $[\cM] \in \pi_0 \MNE(\cB_G)$ and on the trivializing cochains $\phi,\rho$. 
  The dependence on the trivializing cochains is essentially the nontriviality of the appropriate Postnikov invariants --- compare Remark~\ref{remark:LKWmext}. A version of the dependence on $\cM$ is explored in  \cite{2507.06304}.
\end{remark}

Fix $\cM \in \MNE(\cB_G)$. By repeatedly applying \cite{MR1296597}, although $\cM$ may not admit a $G$-fixed structure with trivial $O_4$-obstruction, we can certainly find some surjection of finite groups $\tilde G \epi G$ such that $\cM$ admits a fixed-point structure for the induced action of $\tilde{G}$ on $\MNE(\cB_G)$ and such that the induced $O_4$-obstruction is trivial in $\H^4(\rB \tilde G; \bC^\times)$. Said another way, we can definitely find a surjection $\tilde G \epi G$ such that $\tilde \cB := (\cB_G)^{\tilde G}$ admits a minimal nondegenerate extension, which we henceforth denote $\tilde \cM$.

\begin{proposition} The composite nondegenerate extension
$$ \cB \cong (\cB_G)^G \mono (\cB_G)^{\tilde G} \cong \tilde\cB \mono \tilde \cM$$
is Galois. \end{proposition}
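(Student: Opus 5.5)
The plan follows Remark~\ref{remark:alternate}: we need that $\cD := Z_2(\cB \mono \tilde\cM)$ is integral. Write $\tilde K := \ker(\tilde G \epi G)$.

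\emph{Step 1: reduce to the deequivariantization.} The centre of $\cD$ agrees with the centre $Z_2(\cB)$, and this contains the Tannakian subcategory $\Rep(G)$. So $\cD \cong (\cD_G)^G$. Equivariantization multiplies Frobenius--Perron dimensions of simples by integers and preserves integrality in both directions: simples of $\cD^G$ have FP dimension equal to (orbit size) times (projective representation dimension) times $\FPdim$ of a simple of $\cD_G$. Hence it suffices to show that $\cD_G$ is integral.

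\emph{Step 2: locate a large integral subcategory.} Since $\tilde\cB \mono \tilde\cM$ is minimal nondegenerate, $Z_2(\tilde\cB\mono\tilde\cM)$ is symmetric. It is braided equivalent to $Z_2(\tilde\cB)$, which contains $\Rep(\tilde G)$, and its maximal Tannakian part is $\Rep(\tilde G)$. Indeed $Z_2(\tilde\cB) = Z_2((\cB_G)^{\tilde G})$ should be $\sVec^{\tilde G}$, the extension of $\Rep(\tilde G)$ by the $\sVec$ in $Z_2(\cB_G)$. I will verify this by the standard description of the Müger centre of an equivariantization by a group acting on a slightly degenerate category. Since $\cB \subset \tilde\cB$, we get an inclusion $\cS := Z_2(\tilde\cB \mono \tilde\cM) \mono \cD$. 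Every symmetric category is integral, so $\cS$ is integral. Deequivariantizing by $\Rep(G)\subset \Rep(\tilde G)\subset \cS$ gives $\cS_G \mono \cD_G$, with $\cS_G$ integral: it contains $\Rep(\tilde K)$ and has dimension $2|\tilde K|$.

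\emph{Step 3: dimension count.} By Theorem~3.14 of \cite{MR2609644},
$$\FPdim(\cD) = \FPdim(\tilde\cM)/\FPdim(\cB) = \FPdim(\tilde\cB)\cdot \FPdim(\cS)/\FPdim(\cB).$$
Here $\FPdim(\tilde\cB) = |\tilde G|\FPdim(\cB_G)$, $\FPdim(\cB)=|G|\FPdim(\cB_G)$, and $\FPdim(\cS)=2|\tilde G|$. So $\FPdim(\cD)=2|\tilde K||\tilde G|$ and $\FPdim(\cD_G)=2|\tilde K|^2 = \FPdim(\cS_G)^2/(2)$. The centre of $\cD_G$ is $\sVec$ (the centre of $\cD$ is $Z_2(\cB)$, deequivariantized by $G$). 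The subcategory $\Rep(\tilde K)\subset\cS_G\subset\cD_G$ is Tannakian. Deequivariantizing $\cD_G$ further by $\Rep(\tilde K)$ gives a category whose neutral component $\cD_G{}_{\tilde K}^0$ has FP dimension $2|\tilde K|^2/|\tilde K|^2 = 2$ and contains the $\sVec$. So that component is exactly $\sVec$, which is integral.

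\emph{Step 4: conclude.} $\cD_G$ is recovered from $\sVec$ as the $\tilde K$-equivariantization of a $\tilde K$-crossed braided extension of $\sVec$. Equivalently, the centralizer of $\Rep(\tilde K)$ in $\cD_G$ is $\sVec^{\tilde K}$-like, and $\cD_G$ is a $\tilde K$-graded extension of an integral category of dimension $2|\tilde K|$. Every graded component of a faithfully graded extension has the same FP dimension $2|\tilde K|$. Pointedness of the neutral part of the crossed extension ($\sVec$) forces each component of the crossed extension to have dimension $2$, so each consists of one simple of dimension $\sqrt2$ or two invertibles. I expect the main obstacle here: the $\sqrt2$ (Ising-type) possibility must be ruled out. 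I would rule it out by noting that the $\tilde K$-crossed extension restricted to $\sVec$ arises inside a genuine minimal extension of $\Rep(\tilde K)\boxtimes\sVec$ coming from the symmetric $\cS$. Alternatively, I would find a $\tilde K$-set-up where $\cD_G \boxtimes_{\sVec}$-type products with $\cS_G$ force integrality via the fermion absorbing the twisted sectors. If this fails, fall back on Galois-theoretic integrality: FP dimensions in $\cD_G$ are of the form $\sqrt{m}$ times integers, and a dimension-parity argument is needed.
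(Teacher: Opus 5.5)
\paragraph{Gap in Step 4.} Your Steps 1--3 reproduce the paper's argument: deequivariantize by $\Rep(G)$, obtain $\Rep(K)\mono Z_2(\cB\mono\tilde\cM)_G$ from $Z_2(\tilde\cB\mono\tilde\cM)$ (your $\tilde K$ is the paper's $K$), count $\dim Z_2(\cB\mono\tilde\cM)_G=2|K|^2$, and deequivariantize by $\Rep(K)$. This gives a $K$-crossed extension of $\sVec$ whose components are ordinary (two invertibles) or Majorana (one simple of dimension $\sqrt2$). The gap is Step 4. You give no argument excluding Majorana components, and the routes you sketch do not supply one.

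The paper handles this case with the lemma that a slightly degenerate category has no Majorana objects ($\eta(fx)=\eta(f)\eta(x)=-\eta(x)$). That lemma only shows $f\otimes X\not\cong X$ for simples $X$ of the \emph{braided} category $Z_2(\cB\mono\tilde\cM)_G$. A simple $X$ lying over a Majorana simple $x$ of the crossed extension already satisfies this, because $X$ and $f\otimes X$ are two different equivariant structures on $x\cong f\otimes x$. So the lemma excludes $f$-fixed simples of the braided category, not Majorana components of its crossed deequivariantization.

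\paragraph{The Majorana case occurs.} So Step 4 cannot be completed for an arbitrary $\tilde G$ and an arbitrary minimal $\tilde\cM$; as far as I can check, the statement as written fails. Take $\cB=\sVec$ (so $G=1$) and $\tilde G=K=\bZ/2$, so $\tilde\cB=\sVec\boxtimes\Rep(\bZ/2)$. Embed $\tilde\cB$ in $\tilde\cM=\mathrm{Ising}\boxtimes\mathrm{Ising}$ by $\psi\mapsto\psi\boxtimes\mathbf 1$ and $\mathrm{sgn}\mapsto\psi\boxtimes\psi$.
\begin{itemize}
\item This is a minimal nondegenerate extension: $\dim\tilde\cM=16=4\cdot 4$, and the centralizer of the image is $\{\mathbf 1,\psi\}\boxtimes\{\mathbf 1,\psi\}$, which is the image itself.
\item It lies within the paper's procedure: deequivariantizing by $\psi\boxtimes\psi$ gives the pointed extension $U(1)_4\in\MNE(\sVec)$ with the charge-conjugation $\bZ/2$-action.
\item Yet $Z_2(\cB\mono\tilde\cM)=\{\mathbf 1,\psi\}\boxtimes\mathrm{Ising}$ contains $\mathbf 1\boxtimes\sigma$, of dimension $\sqrt2$, so the extension is not Galois.
\item This category has every property you derived in Steps 1--3. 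Its deequivariantization by $\langle\psi\boxtimes\psi\rangle$ is the Ising fusion category, whose odd component is Majorana.
\end{itemize}
Choosing $\tilde G=G=1$ here does give a Galois extension, so the Main Theorem itself is not contradicted.

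What is missing is therefore not a cleverer argument but an extra constraint on the choice of $(\tilde G,\tilde\cM)$. It must force every component of the $K$-crossed extension to be ordinary. Once that holds, the crossed extension is pointed, and your Step 1 transfer of integrality through equivariantization finishes the proof.
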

\begin{proof}
  It suffices to control the centralizer $Z_2(\cB \mono \tilde \cM)$. Its centre agrees with the centre of $\cB$: it is super-Tannakian with maximal Tannakian subcategory $\Rep(G)$. Thus we can deequivariantize:
  $Z_2(\cB \mono \tilde \cM) = (Z_2(\cB \mono \tilde \cM)_G)^G$ and $Z_2(\cB \mono \tilde \cM)_G$ is slightly degenerate. Moreover, there is a braided inclusion
  $$ \Rep(\tilde G) \mono Z_2(\tilde \cB \mono \tilde \cM) \mono Z_2(\cB \mono \tilde \cM).$$
  This dequivariantizes to a braided inclusion
  $$ \Rep(K) \cong \Rep(\tilde G)_G \mono Z_2(\cB \mono \tilde \cM)_G.$$
  
  Counting dimensions, we find
  $$ \dim(Z_2(\cB \mono \tilde \cM)_G) = 2|K|^2.$$
  In other words, the inclusion $\Rep(K) \mono Z_2(\cB \mono \tilde \cM)_G$ is what one could call a ``minimal slightly degenerate extension.'' We will analyze these in analogy to the classification of minimal nondegenerate extensions of $\Rep(K)$ from \cite{MR1923177,MR3613518}.
  
  Let $\cN$ be an arbitrary slightly degenerate braided fusion category of dimension $\dim(\cN) = 2|K|^2$ equipped with a braided inclusion $\Rep(K) \mono \cN$. We can deequivariantize with respect to this inclusion. Since the inclusion is braided but not central, this deequivariantization $\cN_K$ will be monoidal but not braided. Rather, $\cN_K$ will be automatically a $K$-crossed braided extension. In particular, it will be faithfully $K$-graded, and the neutral component is central. Since $\cN$ was slightly degenerate, so will be the neutral component of $\cN_K$. Since $\cN$ had dimension $2|K|^2$, this neutral component will have dimension $2$, i.e.\ it will be $\sVec$. All together, we find that $\cN_K$ is a $K$-graded central extension of $\sVec$.
  
  Each component of $\cN_K$ will therefore be an invertible $\sVec$-module category. There are two such categories: the regular module $\sVec$ and the module $\Vec = \Mod_{\sVec}(\operatorname{Cliff}(1))$ coming from the monoidal but not braided functor $\sVec \to \Vec$. Given a simple object $x \in \cN_K$, one can quickly decide which of these two types is its component. Indeed,
  let $f \in \sVec$ denote the nontrivial simple object. If $f \otimes x \not\cong x$, the component containing $x$ is of type $\sVec$. In this case, the object $x$ is called \define{ordinary}. If $f \otimes x \cong x$, the component containing $x$ is of type $\Vec = \Mod(\operatorname{Cliff}(1))$. In this case $x$ is called \define{Majorana}.
  The dimensions of simple objects are easy to compute: $\dim(x) = 1$ if $x$ is ordinary and $\dim(x) = \sqrt{2}$ if $x$ is Majorana.
  
  The action of $K$ on $\cN_K$ cannot mix ordinary and Majorana objects. It follows that in $\cN \cong (\cN_K)^K$, the ordinary objects have integral dimensions whereas the Majorana objects have dimensions in $\bZ\sqrt{2}$.
  
  But in a slightly degenerate category, there are no Majorana objects! This is explained for example in the proof of Corollary~3.47 of \cite{MNE}. The proof given there: Let $\eta(x)$ denote the scalar defined by the composite
  $$ \mathbf{1} \overset{\operatorname{coev}_x}\longto x \otimes x^* \overset{\operatorname{br}_{x,x^*}}\longto x^* \otimes x \overset{\operatorname{ev}_x}\longto \mathbf{1}.$$ 
  Since $f$ is central, $\eta(fx) = \eta(f)\eta(x)$, and $\eta(f) = -1$; but $\eta(x) \neq 0$ if $x$ is simple.
   Thus $\cN$ is integral.
\end{proof}

Together with Proposition~\ref{prop:pseudounitary}, this proves our Main Theorem in the non-Tannakian case.

%\bibliography{ReferencesWithLinks}{}
%\bibliographystyle{alpha}

\newcommand{\etalchar}[1]{$^{#1}$}

\end{document}